\DeclareMathOperator{\tr}{tr}
\newtheorem{lemma}{Lemma}[section]
\newtheorem{theorem}{Theorem}[section]
\newtheorem{corollary}{Corollary}[section]
\newtheorem{remark}{Remark}[section]
\newtheorem{definition}{Definition}[section]
\newtheorem{assumption}{Assumption}[section]
\newtheorem{proposition}{Proposition}[section]
\title{\LARGE \bf
A Primal-Dual Gradient Descent Approach to the \\ Connectivity Constrained Sensor Coverage Problem
}
\author{Mathias B. Agerman$^{1}$, Ziqiao Zhang$^{2}$, Jong Gwang Kim$^{3}$, Shreyas Sundaram$^{2}$ and Christopher G. Brinton$^{2}$ %
\thanks{$^{1}$ M. Agerman is a student of Engineering Mathematics,
        KTH Royal Institute of Technology, SE-100 44 Stockholm, Sweden.
        Email: mathba@kth.se.}%
\thanks{$^{2}$ Z. Zhang, S. Sundaram, and C. Brinton are with
the Elmore Family School of Electrical and Computer Engineering, Purdue University, West
Lafayette, IN 47906, USA. Email: \{zhan5780, sundara2, cgb\}@purdue.edu.}%
\thanks{$^{3}$ J. Kim is with the Coles College of Business, Kennesaw State University, GA 30144, USA. Email: jkim311@kennesaw.edu}%
\thanks{This work was supported by Saab Inc. and Saab AB.}%
}
\begin{document}

\maketitle
\thispagestyle{empty}
\pagestyle{empty}

\begin{abstract}

Sensor networks play a critical role in many situational awareness applications. %
In this paper, we study the problem of determining sensor placements to balance coverage and connectivity objectives  over a target region. Leveraging algebraic graph theory, we formulate a novel optimization problem to maximize sensor coverage over a spatial probability density of event likelihoods while adhering to connectivity constraints. 
To handle the resulting non-convexity under constraints, we develop an augmented Lagrangian-based gradient descent algorithm inspired by recent approaches to efficiently identify points satisfying the Karush-Kuhn-Tucker (KKT) conditions. 
We establish convergence guarantees by showing necessary assumptions are satisfied in our setup, including employing Mangasarian-Fromowitz constraint qualification to prove the existence of a KKT point.
Numerical simulations under different probability densities demonstrate that the optimized sensor networks effectively cover high-priority regions while satisfying desired connectivity constraints. 
\end{abstract}

\section{INTRODUCTION}

We consider the problem of deploying a network of static sensors over a target region, given a geo-spatial distribution of event likelihoods. In situational awareness applications such as Intelligence, Surveillance, and Reconnaissance (ISR), it is desirable to maximize coverage over such regions (i.e., to make sure critical events are detected) while ensuring the network is connected (i.e., so that critical information propagates through the system).
Combining coverage and connectivity objectives is non-trivial due to their contradictory nature: intuitively, maximizing coverage typically requires the sensor nodes to spread apart, while connectivity maintenance requires them to stay sufficiently close together, depending on the number of sensors available. In this work, we aim to provide a computationally efficient solution to balancing connectivity and coverage while making minimal assumptions on the deployment region.

\subsection{Related Work}
Node placement has been well-studied in the literature on wireless sensor networks \cite{survey_coverage_connectivity, connectivity_survey, survey_coverage_connectivity_2}
In general, there are two main classes of sensor deployment algorithms: optimization-based methods and control-theoretic methods. While the latter methods are primarily used to guide mobile sensors to appropriate locations, they can also be used to deploy a static network by simulating the trajectories of the sensors and placing the sensors at the final points. 

\subsubsection{Optimization-based methods}
Optimization-based algorithms usually adopt a boolean disk coverage model \cite{GA, SA}. The coverage problem is NP-hard, which is often solved heuristically using methods such as genetic algorithms \cite{GA} or simulated annealing \cite{SA}. The problem is often discretized to account for the binary notion of connectivity \cite{discrete, discrete2}. In this paper, we relax the binary definition of connectivity, as in \cite{connectivity}, and use algebraic graph theory to model the communication topology. To ensure the network is connected, we incorporate a constraint based on the \textit{algebraic connectivity} of the network graph. We develop a primal-dual gradient descent algorithm based on \cite{ppala} to guarantee convergence to a point satisfying the Karush-Kuhn-Tucker (KKT) conditions. 

In this respect, the authors in \cite{connectivity} derived a potential field approach to guarantee connectivity by relaxing the definition of connectivity. Although this approach is successful when combined with other binary objectives such as collision avoidance and tracking leaders, it cannot be easily modified to deal with the maximization of other objectives. 
Our method is amenable to such modifications, e.g., we will augment our objective function with a regularization term to reflect prior knowledge of a desirable network. 

\subsubsection{Control-theoretic methods}
Control-theoretic algorithms have often studied \textit{expected-value multicenter functions} as coverage objectives, where strength decreases based on sensor distance \cite{basis, cortes2004coverage, distributed, generalized_voronoi} with possible discontinuities \cite{bullo_book}. 
The advantage of these models are their distributive nature, making them suitable for applications in mobile sensing networks where communication between agents cannot always be guaranteed. To maintain connectivity while achieving good coverage, a few works have incorporated algebraic connectivity models as well. For instance, \cite{control_lambda_2} derived a decentralized control law based on the algebraic connectivity to ensure the mobile sensor network remains connected. The control law was later applied in \cite{control_lambda_2_implementation} to balance coverage and connectivity objectives. In \cite{similar_problem}, a control law that ensures connectivity was proposed together with regularization to drive agents out of local minima. In \cite{control_barrier, cbf}, constraints for \textit{connectivity maintenance} were derived based on the algebraic connectivity. This approach was also used in \cite{voronoi_with_connectivity} to induce minimal change to the primary coverage controller to improve coverage while ensuring the control law still maintains connectivity among mobile sensors.

On the other hand, if the aim is to deploy a network of \textit{static} sensors, the control-theoretic algorithms proposed in these works have two main drawbacks: 1) it is generally assumed that the network is initially connected, and 2) to obtain formal guarantees on coverage performance while ensuring connectivity, the control law must generally be constrained by the current graph topology (or parts of it). While these assumptions are well motivated in mobile sensor networks, they can cause sensors to get stuck at undesirable solutions which are important to avoid for long-term deployments of static sensor networks. Our proposed algorithm, by contrast, allows the sensors to converge from any initial configuration and treats connectivity as a global attribute of the network.

\subsection{Summary of Contributions}
In this work, we develop a novel methodology for solving the sensor placement problem to balance flexible connectivity and coverage objectives over a target region. Compared to existing control-theoretic approaches, we embrace the centralized nature of network planning and, by doing so, are able to propose efficient optimization algorithms that find solutions over a geo-spatial distribution of event likelihoods. Contrary to existing optimization-based approaches, we adopt expected-value multicenter function objectives and a relaxed notion of connectivity which yields strong theoretical guarantees. Specifically, our main contributions are: 
\begin{itemize}
    \item[(a)] Developing a primal-dual gradient descent-based algorithm for optimizing sensor placements to balance coverage and connectivity objectives over a target region, while incorporating regularization to adjust the influence of prior knowledge distributions.
    \item[(b)] Establishing convergence guarantees of our algorithm to a KKT-point from any initial configuration, by showing necessary assumptions such as Mangasarian-Fromowitz Constraint Qualification are satisfied in our setup.
    \item[(c)] Numerically validating the convergence behavior and constraint satisfaction of sensor placements obtained by our algorithm under different geo-spatial event likelihood distributions.
\end{itemize}

\section{MODELING PRELIMINARIES}

In this section, we briefly present some key concepts of graph connectivity that will be leveraged throughout the paper. For a more thorough introduction, see e.g., \cite{connectivity}. 

Assume there are $n$ nodes with positions $x_1, x_2, \dots, x_n \in \mathbb{R}^d$ and let $\bm{x} = [x_1^T \ x_2^T \ \dots \ x_n^T]^T \in \mathbb{R}^{nd}$. The $k$-th component of an arbitrary node's position $x_i$ is denoted as $x_i^{(k)}$, where $k=1,\dots, d$.  Moreover, let $G(\bm{x})$ be the graph with vertices $V = \{1, 2, \dots, n\}$, indexed by the respective nodes. We define the edge set to be 
\begin{equation}\label{def:edge_set}
    E(\bm{x}) = \left\{ (i, j) \in V \times V : ||x_i - x_j|| \leq r, \ i \neq j \right\},
\end{equation}
where the norm $||\cdot||$ is the Euclidean distance and $r > 0$ is the transmission range. We say that $G$ is \textit{connected} if there exists a path between any two vertices of the graph. Note that \eqref{def:edge_set} assumes a boolean disk communication model between nodes, making the notion of connectivity a discontinuous quality of the network. 
To enable the application of gradient descent methods, we relax this definition by introducing smooth edge weights as follows.  %

\begin{definition}\label{def:adjacency}
    The \textit{weighted adjacency matrix}, $A(\bm{x}) \in \mathbb{R}^{n\times n}$, of a graph $G(\bm{x})$ is defined as
    \begin{equation*}
        A(\bm{x}) = (a_{ij}(\bm{x})) = \begin{cases}
            0, & i = j, \\
            \sigma_w\left( \varepsilon - ||x_i - x_j||\right), & i \neq j,
        \end{cases}
    \end{equation*}
    where $\sigma_w(y) = 1/(1+e^{-wy})$ is the sigmoid function, $w > 0$ determines the steepness of the weight decay and $\varepsilon>0$.
\end{definition}

In general, the results of this paper hold for any sufficiently smooth strictly increasing function $\sigma_w$. As $w$ tends to infinity, the above function approximates the binary nature of the edge set in \eqref{def:edge_set}, with $\varepsilon$ representing the transmission range. %
Based on this continuous interpretation of edge weights, \cite{connectivity} characterizes the notion of connectivity through the spectral properties of its Laplacian matrix, thus facilitating a framework better suited to gradient-based methods.
\vspace{0.05in}
\begin{definition}
    The \textit{weighted Laplacian matrix}, $\mathcal{L}(\bm{x}) \in \mathbb{R}^{n\times n}$, of a graph $G(\bm{x})$ is defined as
    \begin{equation*}
        \mathcal{L}(\bm{x}) = \Delta(\bm{x}) - A(\bm{x}),
    \end{equation*}
    where $\Delta(\bm{x}) = \text{diag}\left(\sum_{j=1}^n a_{ij}(\bm{x})\right)$ is the \textit{valency matrix} of $G(\bm{x})$.
\end{definition}
\vspace{0.05in}
The following lemmas are crucial for the remainder of the paper.
\vspace{0.05in}

\begin{lemma}[\cite{laplacian}]\label{lemma:laplacian}
    Let $\lambda_1(\bm{x}) \leq \lambda_2(\bm{x}) \leq \dots \leq \lambda_n(\bm{x})$ be the eigenvalues of the Laplacian $\mathcal{L}(\bm{x})$. Then
    \begin{itemize}
        \item[$(i)$] $\lambda_1(\bm{x}) = 0$ and the corresponding eigenvector is the all-ones vector $\bm{1}_n \in \mathbb{R}^n$, and
        \item[$(ii)$] $\lambda_2(\bm{x}) > 0$ if and only if $G(\bm{x})$ is connected.
    \end{itemize}
\end{lemma}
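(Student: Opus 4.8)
The plan is to derive both claims from the quadratic-form representation of the weighted Laplacian together with a connectivity-propagation argument; since the statement is classical, it essentially amounts to checking that $\mathcal{L}(\bm{x})$ satisfies the hypotheses of the standard Laplacian spectral theorem. First I would record the two structural facts that drive everything: (a) $\mathcal{L}(\bm{x})$ is symmetric, because $a_{ij}(\bm{x}) = a_{ji}(\bm{x})$, as the weight depends only on $\|x_i - x_j\|$; and (b) for any $y = (y_1,\dots,y_n)^\top \in \mathbb{R}^n$,
\[
 y^\top \mathcal{L}(\bm{x})\, y \;=\; \tfrac12 \sum_{i=1}^n \sum_{j=1}^n a_{ij}(\bm{x})\,(y_i - y_j)^2 ,
\]
which follows by expanding $\Delta(\bm{x})$ and $A(\bm{x})$ entrywise and using $a_{ii}(\bm{x})=0$ together with symmetry. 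Since $a_{ij}(\bm{x}) \geq 0$, identity (b) shows $\mathcal{L}(\bm{x})$ is positive semidefinite, so all of its eigenvalues are nonnegative; in particular $\lambda_1(\bm{x}) \geq 0$.

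For part $(i)$, I would simply check that every row of $\mathcal{L}(\bm{x})$ sums to zero: the $i$-th diagonal entry $\sum_{j} a_{ij}(\bm{x})$ cancels the off-diagonal entries $-a_{ij}(\bm{x})$, so $\mathcal{L}(\bm{x})\,\bm{1}_n = \bm{0}$. Hence $0$ is an eigenvalue with eigenvector $\bm{1}_n$, and combined with positive semidefiniteness it is the smallest one, i.e., $\lambda_1(\bm{x}) = 0$.

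For part $(ii)$, the key observation is that, by the spectral theorem for the symmetric PSD matrix $\mathcal{L}(\bm{x})$, $\lambda_2(\bm{x}) > 0$ is equivalent to $\ker \mathcal{L}(\bm{x})$ being one-dimensional, i.e., equal to $\mathrm{span}\{\bm{1}_n\}$. I would prove both directions via identity (b). If $G(\bm{x})$ is connected: any $y \in \ker \mathcal{L}(\bm{x})$ satisfies $y^\top \mathcal{L}(\bm{x})\, y = 0$, so every summand vanishes, forcing $y_i = y_j$ whenever $a_{ij}(\bm{x}) > 0$, i.e., along every edge; since there is a path between any two vertices, an induction on path length propagates this to $y_i = y_j$ for all $i,j$, so $y \in \mathrm{span}\{\bm{1}_n\}$ and $\lambda_2(\bm{x}) > 0$. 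Conversely, if $G(\bm{x})$ is disconnected, I would partition $V$ into a connected component $S$ and its nonempty complement with no edges crossing the partition; then the indicator vector $\bm{1}_S$ lies in $\ker \mathcal{L}(\bm{x})$ (the restricted row sums still cancel since there are no cross edges) and is linearly independent of $\bm{1}_n$, so $\dim \ker \mathcal{L}(\bm{x}) \geq 2$ and $\lambda_2(\bm{x}) = 0$.

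The argument is essentially routine, so the only real care needed is bookkeeping: being explicit that an ``edge'' of $G(\bm{x})$ is a pair $(i,j)$ with $a_{ij}(\bm{x}) > 0$ (equivalently, the pairs in $E(\bm{x})$ under the interpretation of Definition~\ref{def:adjacency}), and stating the path-propagation step in the connected case cleanly as an induction rather than waving at it. Since Lemma~\ref{lemma:laplacian} is cited from \cite{laplacian}, an alternative is to merely verify that $\mathcal{L}(\bm{x})$ meets that reference's hypotheses — symmetric, nonpositive off-diagonal entries, zero row sums — which facts (a), (b), and the row-sum computation above supply directly.
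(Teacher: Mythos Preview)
The paper does not supply its own proof of this lemma; it is simply quoted from the reference \cite{laplacian} as a classical fact about weighted graph Laplacians. Your argument via the quadratic form $y^\top \mathcal{L}(\bm{x})\,y = \tfrac{1}{2}\sum_{i,j} a_{ij}(\bm{x})(y_i-y_j)^2$, row-sum cancellation for part $(i)$, and the kernel/connected-component analysis for part $(ii)$ is the standard textbook proof and is correct; there is nothing to compare it against here beyond the citation.

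One small caveat worth tightening in your write-up: in this particular paper the sigmoid weights satisfy $a_{ij}(\bm{x}) > 0$ for \emph{every} $i \neq j$, so the ``weighted'' graph is complete and your forward direction in $(ii)$ already forces $\lambda_2(\bm{x}) > 0$ unconditionally. Your parenthetical ``equivalently, the pairs in $E(\bm{x})$'' is therefore not literally an equivalence, since $E(\bm{x})$ is defined via the hard threshold $r$ while the support of $a_{ij}$ is all of $V\times V$ minus the diagonal. This is a quirk of the paper's modeling (the smooth Laplacian is meant as a relaxation of the boolean one, cf.\ the remark after Definition~\ref{def:adjacency}), not a flaw in your proof strategy; the cited lemma is stated and proved in \cite{laplacian} for general nonnegative weights, and your argument reproduces that general result faithfully.
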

\vspace{0.05in}

Unfortunately, as remarked in \cite{connectivity}, $\lambda_2(\bm{x})$ is not differentiable. Therefore, the following lemma will be of  use.
\vspace{0.05in}

\begin{lemma}[\cite{connectivity}]\label{lemma:P}
    Let $P = [p_1 \ p_2 \ \dots \ p_{n-1} ] \in \mathbb{R}^{n \times (n-1)}$ be a matrix such that $p_i^Tp_j = 0$ for all $i \neq j$ and $p_i^T \bm{1}_n = 0$ $\forall i$. Then, with the same notation as in Lemma \ref{lemma:laplacian}, $\lambda_2(\bm{x}), \dots, \lambda_n(\bm{x})$ are the eigenvalues of $P^T\mathcal{L}(\bm{x})P$. In particular, $G(\bm{x})$ is connected if and only if 
    \begin{equation}
        \det{\left(P^T\mathcal{L}(\bm{x})P\right)} > 0. \notag
    \end{equation}
    
\end{lemma}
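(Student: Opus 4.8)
The plan is to prove Lemma \ref{lemma:P} by exploiting the spectral structure of $\mathcal{L}(\bm{x})$ together with the defining properties of $P$. First I would observe that the columns $p_1, \dots, p_{n-1}$ form an orthogonal basis for the subspace $\bm{1}_n^\perp \subset \mathbb{R}^n$, since they are mutually orthogonal, nonzero (implicitly), and each lies in the orthogonal complement of $\bm{1}_n$, which is $(n-1)$-dimensional. Normalizing, let $Q = [q_1 \ \cdots \ q_{n-1}]$ with $q_i = p_i/\|p_i\|$; then $[\,\bm{1}_n/\sqrt{n} \ \ Q\,]$ is an orthogonal $n \times n$ matrix. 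Since $\mathcal{L}(\bm{x})$ is real symmetric with $\mathcal{L}(\bm{x})\bm{1}_n = 0$ (Lemma \ref{lemma:laplacian}(i)), conjugating $\mathcal{L}(\bm{x})$ by this orthogonal matrix block-diagonalizes it as $\mathrm{diag}(0, \, Q^T\mathcal{L}(\bm{x})Q)$. Because conjugation preserves the spectrum, the eigenvalues of $Q^T\mathcal{L}(\bm{x})Q$ are exactly $\lambda_2(\bm{x}), \dots, \lambda_n(\bm{x})$.

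**Passing from $Q$ to $P$:**

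Next I would relate $P^T\mathcal{L}(\bm{x})P$ to $Q^T\mathcal{L}(\bm{x})Q$. Writing $D = \mathrm{diag}(\|p_1\|, \dots, \|p_{n-1}\|)$, we have $P = QD$, so $P^T\mathcal{L}(\bm{x})P = D\,(Q^T\mathcal{L}(\bm{x})Q)\,D$. This is a congruence, not a similarity, so it does not preserve eigenvalues in general. However, $D$ is invertible with positive diagonal, so $P^T\mathcal{L}(\bm{x})P$ and $Q^T\mathcal{L}(\bm{x})Q$ have the same inertia (by Sylvester's law of inertia), and in particular the same sign of the determinant: $\det(P^T\mathcal{L}(\bm{x})P) = (\det D)^2 \det(Q^T\mathcal{L}(\bm{x})Q)$ with $(\det D)^2 > 0$. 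One subtlety: the statement of the lemma asserts that $\lambda_2(\bm{x}), \dots, \lambda_n(\bm{x})$ \emph{are} the eigenvalues of $P^T\mathcal{L}(\bm{x})P$, which is literally true only when the $p_i$ are unit vectors; I would either read the lemma's conclusion as ``the nonzero-ness / positivity structure is preserved'' or simply note that WLOG one may take the $p_i$ orthonormal, and it is the determinant characterization that is actually used downstream.

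**Concluding the connectivity characterization:**

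Finally I would combine the spectral facts: $\mathcal{L}(\bm{x})$ is positive semidefinite (it is a weighted Laplacian, hence $\bm{z}^T\mathcal{L}(\bm{x})\bm{z} = \sum_{i<j} a_{ij}(\bm{x})(z_i - z_j)^2 \ge 0$), so $0 = \lambda_1 \le \lambda_2 \le \cdots \le \lambda_n$, and by Lemma \ref{lemma:laplacian}(ii), $G(\bm{x})$ is connected iff $\lambda_2(\bm{x}) > 0$, i.e., iff all of $\lambda_2(\bm{x}), \dots, \lambda_n(\bm{x})$ are strictly positive, i.e., iff $Q^T\mathcal{L}(\bm{x})Q \succ 0$, i.e., iff $\det(Q^T\mathcal{L}(\bm{x})Q) > 0$ (since a symmetric PSD matrix is positive definite exactly when its determinant, the product of its nonnegative eigenvalues, is nonzero). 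By the congruence relation above, this holds iff $\det(P^T\mathcal{L}(\bm{x})P) > 0$, which is the claim. The main obstacle is not any single step but the bookkeeping around the congruence-versus-similarity distinction — making sure the reader sees that eigenvalues themselves are only preserved up to the orthonormality normalization, while the determinant sign (which is all the connectivity test needs) is preserved for any valid choice of $P$.
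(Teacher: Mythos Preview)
The paper does not prove this lemma; it is quoted from \cite{connectivity} and used as a black box, so there is no ``paper's own proof'' to compare against. Your argument is correct and is essentially the standard one: restrict the symmetric matrix $\mathcal{L}(\bm{x})$ to the invariant subspace $\bm{1}_n^\perp$ via an orthonormal basis $Q$ for that subspace, read off the eigenvalues $\lambda_2,\dots,\lambda_n$ from the block-diagonalization $U^T\mathcal{L}U=\mathrm{diag}(0,\,Q^T\mathcal{L}Q)$ with $U=[\bm{1}_n/\sqrt{n}\ \ Q]$ orthogonal, and then push the result to a general orthogonal (not orthonormal) $P$ by the congruence $P^T\mathcal{L}P = D(Q^T\mathcal{L}Q)D$. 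You are also right to flag the subtlety that the eigenvalue claim is literally correct only in the orthonormal case; the paper's own Remark immediately after the lemma (constructing $P$ by Gram--Schmidt \emph{orthonormalization}) indicates that orthonormal columns are what is actually intended and used downstream, so your ``WLOG orthonormal'' reading is the right one. The determinant criterion, as you note, survives either way via $\det(P^T\mathcal{L}P)=(\det D)^2\det(Q^T\mathcal{L}Q)>0$ together with the positive semidefiniteness of $\mathcal{L}(\bm{x})$ and Lemma~\ref{lemma:laplacian}(ii).
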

\vspace{0.1in}

\begin{remark}
    The matrix $P$ can, for instance, be obtained by applying Gram-Schmidt orthonormalization on the columns of the matrix $I_n - \frac{1}{n}\bm{1}\bm{1}^T$, where $I_n$ is the $n \times n$ identity matrix. 
\end{remark}

Lastly, we have the following closed-form expression for the gradient of $M(\bm{x}) := P^T\mathcal{L}(\bm{x})P$.
\vspace{0.05in}

\begin{lemma}[\cite{connectivity}]\label{lemma:grad_g}
    With $M(\bm{x})$ defined as above and $C(\bm{x})$ defined as the corresponding cofactor matrix, the following holds for all components $x_i^{(k)}$ of $\bm{x}$, $k=1,\cdots, d$, 
    \begin{equation}\label{eq:trace}
    \begin{aligned}
        \frac{\partial}{\partial x_i^{(k)}} \det \, &M(\bm{x}) = \tr \left[ C^T(\bm{x}) \frac{\partial}{\partial x_i^{(k)}} M(\bm{x})\right]\\
        &= \det M(\bm{x})\tr\left[M^{-1}(\bm{x})\frac{\partial}{\partial x_i^{(k)}}M(\bm{x})\right],
    \end{aligned}
    \end{equation}
    where $\frac{\partial}{\partial x_i^{(k)}} M(\bm{x}) = P^T \frac{\partial}{\partial x_i^{(k)}}\mathcal{L}(\bm{x}) P$ and $\frac{\partial}{\partial x_i^{(k)}} \mathcal{L}(\bm{x})$ is element-wise differentiation of $\mathcal{L}(\bm{x})$ with respect to $x_i^{(k)}$.
\end{lemma}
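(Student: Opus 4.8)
The plan is to reduce the statement to Jacobi's formula for the derivative of a determinant, applied through the chain rule. First I would check differentiability of $M(\bm{x})$ in each coordinate $x_i^{(k)}$: since $\sigma_w$ is smooth and, away from configurations with coincident sensors, $\bm{x}\mapsto\|x_i-x_j\|$ is smooth, every entry $a_{ij}(\bm{x})$ of $A(\bm{x})$ is differentiable, hence so are the entries of $\Delta(\bm{x})$ and $\mathcal{L}(\bm{x})$. Because $P$ is a constant matrix, $M(\bm{x})=P^T\mathcal{L}(\bm{x})P$ is then differentiable with $\frac{\partial}{\partial x_i^{(k)}}M(\bm{x})=P^T\big(\frac{\partial}{\partial x_i^{(k)}}\mathcal{L}(\bm{x})\big)P$, the inner derivative being taken entrywise; this establishes the auxiliary identity stated at the end of the lemma.

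Next I would prove the first equality using multilinearity of the determinant in its rows. Writing $m_1,\dots,m_n$ for the rows of $M(\bm{x})$, differentiation gives $\frac{\partial}{\partial x_i^{(k)}}\det M=\sum_{a}\det\big(m_1,\dots,\tfrac{\partial m_a}{\partial x_i^{(k)}},\dots,m_n\big)$; expanding the $a$-th term along its $a$-th row yields $\sum_b \tfrac{\partial M_{ab}}{\partial x_i^{(k)}}\,C_{ab}$, since each cofactor $C_{ab}$ does not involve row $a$. Summing over $a$ gives $\sum_{a,b}\tfrac{\partial M_{ab}}{\partial x_i^{(k)}}C_{ab}=\tr\big(C^T\tfrac{\partial}{\partial x_i^{(k)}}M\big)$, which is Jacobi's formula with the adjugate $\mathrm{adj}(M)=C^T$. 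For the second equality I would invoke the classical identity $M\,\mathrm{adj}(M)=\det(M)\,I$, i.e.\ $C^T=\det(M)\,M^{-1}$ whenever $M(\bm{x})$ is invertible --- which by Lemma \ref{lemma:P} holds exactly when $G(\bm{x})$ is connected --- and substitute, using linearity of the trace.

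The main obstacle is not the algebra, which is standard, but the regularity caveat: $\|x_i-x_j\|$ is not differentiable when $x_i=x_j$, so $M(\bm{x})$ (and hence $\det M$) is only guaranteed differentiable on the open set where all sensor positions are distinct. I would either state this restriction explicitly or note that the result is applied along trajectories that avoid sensor collisions. A secondary point worth flagging is that the inverse-based form of the gradient is valid only on the connected regime $\det M(\bm{x})>0$, while the cofactor-based form holds unconditionally wherever $M$ is differentiable.
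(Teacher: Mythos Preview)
The paper does not prove this lemma; it is quoted with attribution to \cite{connectivity} and used as a black box. Your derivation is the standard proof of Jacobi's formula via row-multilinearity and cofactor expansion, followed by the adjugate identity $\mathrm{adj}(M)=\det(M)\,M^{-1}$, and it is correct. One cosmetic slip: $M(\bm{x})$ is $(n-1)\times(n-1)$, so your row index should run over $1,\dots,n-1$ rather than $1,\dots,n$. Your remarks on the domain of differentiability (distinct sensor positions) and on the invertibility caveat for the second equality are exactly the points the paper itself flags immediately after stating the lemma and later via the set $S$ and the minimum-distance constraints.
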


The second equality of \eqref{eq:trace} assumes that $M(\bm{x})$ is invertible and, while this is a theoretical consequence of the smooth edge weights introduced in Definition \ref{def:adjacency}, the matrix $M(\bm{x})$ can be ill-conditioned in practice. Therefore, although this form is useful from a theoretical standpoint, it is not recommended for numerical applications.

\section{PROBLEM FORMULATION}
In this section, we first present the unconstrained coverage problem, and then formulate the connectivity constrained coverage problem for static sensor networks.

\subsection{The Unconstrained Problem}

Let $Q \subset \mathbb{R}^d$ be a convex and compact set and $x_1, x_2, \dots, x_n \in Q$ denote the positions of the $n$ static sensors. Furthermore, let $\phi$ be a strictly positive probability density on $Q$ and $f: \mathbb{R}_{\geq 0} \to \mathbb{R}_+$ be a strictly increasing continuously differentiable function such that $f(||q - x_i||)$ quantifies the degree of uncertainty in a measurement at a point $q \in Q$ by the sensor at $x_i$, $\forall i \in V$. A commonly studied objective function (see e.g. \cite{bullo_book}) is the \textit{expected-value multicenter} function 
\begin{equation}\label{eq:unconstrained}
    \mathcal{H}(\bm{x})=\mathcal{H}(x_1, \dots, x_n) = \int_Q \min_i f(||q - x_i||) \phi(q) \ dq.
\end{equation}
By defining the Voronoi region of sensor $i$ as
\begin{equation}
    V_i = \{q \in Q: ||x_i - q|| \leq ||x_j - q||, \ \forall j \neq i\},
\end{equation}
it is clear that $\eqref{eq:unconstrained}$ can be equivalently expressed as
\begin{equation}
    \mathcal{H}(\bm{x})=\mathcal{H}(x_1, \dots, x_n) =  \sum_{i=1}^n \int_{V_i} f(||q - x_i||) \phi(q) \ dq.
\end{equation}
The strength of this formulation, usually leveraged in mobile sensor networks, is its distributive nature. In particular, one can show \cite{bullo_book} that whenever $x_i \neq x_j$ for all $i \neq j$
\begin{equation}\label{eq:grad_H}
    \nabla_{x_i}{\mathcal{H}} = \int_{V_i} \nabla_{x_i}{f(||q-x_i||)} \phi(q) \ dq. 
\end{equation}

\subsection{The Connectivity Constrained Problem}

A weakness of formulation \eqref{eq:unconstrained} is that it does not consider connectivity among sensors. %
Therefore, we consider problem \eqref{eq:unconstrained} coupled with a connectivity constraint. To this end, let $\bm{x} = [x_1^T \ x_2^T \ \dots \ x_n^T]^T \in \mathbb{R}^{nd}$ be the $nd \times 1$ vector obtained by stacking $x_1, \dots, x_n$ on top of each other and $\mathcal{L}(\bm{x})$ be the weighted Laplacian matrix of the graph $G(\bm{x})$ induced by a sensor placement. In light of Lemma \ref{lemma:P} we consider the following constrained problem
\begin{equation}\label{problem:constrained}
\begin{aligned}
\min_{\bm{x} \in Q^n} \quad & \mathcal{H}(\bm{x})\\
\textrm{s.t.} \quad & \tau - \det{\left(P^T\mathcal{L}(\bm{x})P\right)} \leq 0,\\
&\delta - \| x_i - x_j\| \leq 0, \ 1 \leq i < j \leq n.
\end{aligned}
\end{equation}
where $\tau > 0$ is a (small) threshold to enforce connectivity on $G(\bm{x})$ and $\delta > 0$ is the minimum distance between sensors. To simplify notation, we also define $g(\bm{x}) := \tau - \det{\left(P^T\mathcal{L}(\bm{x})P\right)}$ and $g_{ij}(\bm{x}) := \delta - \|x_i - x_j\|$, for $1 \leq i < j \leq n$. %
As discussed later, the minimum distance constraints are introduced to ensure the compactness of the feasible region and are also motivated by the sensors having some physical radius.

Problem \eqref{problem:constrained} is a non-convex and nonlinear optimization problem. Its strength, however, lies in the fact that $\mathcal{H}$ and $g$ are continuously differentiable as long as $x_i \neq x_j$ for all $i \neq j$, making gradient descent methods a suitable approach to tackle it. 

To further highlight the flexibility of the proposed problem, we note that there are algorithms capable of solving problems such as \eqref{problem:constrained} even with the addition of a regularization term. Formally, if $r: \mathbb{R}^{nd} \to \mathbb{R} \cup \{ + \infty \}$ is a convex, closed, proper, and possibly non-smooth function (see e.g. \cite{ppala}), we may also consider
\begin{equation}\label{problem:regularized}
\begin{aligned}
\min_{\bm{x} \in Q^n} \quad & \mathcal{H}(\bm{x}) + r(\bm{x})\\
\textrm{s.t.} \quad & \tau - \det{\left(P^T\mathcal{L}(\bm{x})P\right)} \leq 0,\\
&\delta - \| x_i - x_j\| \leq 0, \ 1 \leq i < j \leq n.
\end{aligned}
\end{equation}
Incorporating regularization such as the one mentioned above is, in general, a non-trivial task.  Certain approaches such as \cite{similar_problem} are able to give some locally optimal guarantees with some classes of regularization functions, but cannot be easily modified to incorporate more general classes (such as non-smooth regularization). 

\section{SOLUTION METHODOLOGY}

Since problem \eqref{problem:constrained} is a constrained non-convex optimization problem, it is generally challenging to find even an approximate global minimizer. Hence,  suitable convergence criteria are required to provide reasonable computational guarantees. As a practical alternative, the goal is often to identify a KKT-point (i.e., a first-order stationary point) \cite{Bertsekas}. 

In this work, we will use the \textit{Proximal-Perturbed Augmented Lagrangian Algorithm} (PPALA) \cite{ppala} as shown in Algorithm \ref{alg: PPALA}, to find a KKT-point. 
The PPALA applies a gradient scheme to the \textit{Proximal-Perturbed Augmented Lagrangian} (PPAL):
\begin{equation*}
    \mathcal{L}_\rho(\bm{x}, \bm{u},\bm{z},\bm{\lambda}, \bm{\mu}) = \ell_\rho(\bm{x}, \bm{u},\bm{z},\bm{\lambda}, \bm{\mu}) + r(\bm{x}),
\end{equation*}
where 
\begin{equation*}
    \begin{split}
    \ell_\rho(\cdot) = \mathcal{H}(\bm{x}) &+ \bm{\lambda}^T( \bar{g}(\bm{x}) + \bm{u} - \bm{z} ) + \bm{\mu}^T\bm{z} + \frac{\omega}{2}\|\bm{z}\|^2  \\
    &- \frac{\beta}{2}\|\bm{\lambda} - \bm{\mu}\|^2 + \frac{\rho}{2}\|\bar{g}(\bm{x}) + \bm{u}\|^2.
    \end{split}
\end{equation*}
State $\bm{x}$ is the position vector $\bm{x}$ in problem \eqref{problem:constrained}, $\bm{\lambda}$ is the Lagrange multiplier, $\bm{u}$ is the slack variable, $\bm{z}$ is the perturbation variable defined by $\bm{z}=\bar{g}(\bm{x})+\bm{u}$, $\bm{\mu}$ is the auxiliary multiplier, and $\bar{g}(\bm{x})$ is the collection of all inequality constraints.
We also define $\omega>0$ and $\rho = \frac{\omega}{1+\omega\beta}$ as penalty parameters, where $\beta>0$ is a dual proximal parameter. Moreover, let $U$ be an upper bound of $\|g(\bm{x})\|$. Lastly, we define step sizes $\eta > 0$, $\kappa > 0$ and $\sigma(t) > 0$.
\begin{algorithm}
    \caption{Proximal-Perturbed Augmented Lagrangian Algorithm (PPALA) \cite{ppala}}\label{alg: PPALA}
    \begin{algorithmic}[1]
        \State \textbf{Input:} Initialization $(\bm{x}(0), \bm{u}(0), \bm{z}(0), \bm{\lambda}(0), \bm{\mu}(0))$, and parameters $\omega > 1$, $\beta \in (0,1)$, $\rho = \frac{\omega}{1 + \omega \beta}$, and $T$.
        \For{$t = 0, 1, \ldots, T$}
            \State Compute $\bm{x}(t+1)$ by the proximal gradient scheme:
            \[
            \begin{aligned}
            &\bm{x}(t+1) = \mathop{\arg\min}_{\bm{x} \in Q^n} \{ \| \bm{x} - \bm{x}(t) \|^2/(2\eta) + r(\bm{x})  \\
            &+ 
            \langle \nabla_{\bm{x}} \ell_\rho(\bm{x}(t), \bm{u}(t), \bm{z}(t), \bm{\lambda}(t), \bm{\mu}(t)), \bm{x} - \bm{x}(t) \rangle\};
            \end{aligned}
            \]
            \State Compute $\bm{u}(t+1)$ by the projected gradient descent:
            \[
            \bm{u}(t+1) = \Pi_{[0,U]}[\bm{u}(t) - \kappa(\bm{\lambda}(t) + \rho\left(\bar{g}(\bm{x}(t)) + \bm{u}(t)\right))];
            \]
            \State Update the auxiliary multiplier $\bm{\mu}(t+1)$ by:
            \[
            \bm{\mu}(t+1) = \bm{\mu}(t) + \sigma(t)(\bm{\lambda}(t) - \bm{\mu}(t));
            \]
            \State Update the multiplier $\bm{\lambda}(t+1)$ by:
            \[
            \bm{\lambda}(t+1) = \bm{\mu}(t+1) + \rho(\bar{g}(\bm{x}(t+1)) + \bm{u}(t+1));
            \]
            \State Compute $\bm{z}(t+1)$ by:
            \[
            \bm{z}(t+1) = \frac{1}{\omega}(\bm{\lambda}(t+1) - \bm{\mu}(t+1));
            \]
        \EndFor
    \end{algorithmic}
\end{algorithm}

The PPALA guarantees the convergence to a KKT-point in the feasible region $\mathcal{D} \subset Q^n$ of problem \eqref{problem:constrained} under the following standard assumptions.
\vspace{0.1in}

\begin{assumption}\label{assumption:KKT}
    There exists a point $(\bm{x}, \bm{\lambda}) \in \mathcal{D} \times \mathbb{R}^{m}$ satisfying the KKT conditions, where $\bm{\lambda}$ is the Lagrange multiplier associated with the $m = \frac{n(n-1)}{2} + 1$ inequality constraints in \eqref{problem:constrained}.
\end{assumption}
\vspace{0.1in}

\begin{assumption}\label{assumption:compact}
    Region $\mathcal{D}$ is compact.
\end{assumption}
\vspace{0.1in}

\begin{assumption}\label{assumption:Lipschitz}
    Functions $\nabla \mathcal{H}$ and $\nabla g$ are Lipschitz continuous on the domain $\mathcal{D}$. That is, there exist constants $L_{\mathcal{H}}>0$ and $ L_g > 0$ such that
    \begin{equation*}
    \begin{aligned}
        ||\nabla \mathcal{H}(\bm{q}) - \nabla \mathcal{H}(\bm{q'})|| &\leq L_{\mathcal{H}}||\bm{q}-\bm{q'}||, \quad&\forall \bm{q},\bm{q'} \in \mathcal{D},\\
        ||\nabla g(\bm{q}) - \nabla g(\bm{q'})|| &\leq L_g||\bm{q}-\bm{q'}||, \quad&\forall \bm{q},\bm{q'} \in \mathcal{D}.
    \end{aligned}
    \end{equation*}
\end{assumption}
\vspace{0.15in}

We have introduced the domain $\mathcal{D}$ since $\mathcal{H}$ is only differentiable on $Q^n \setminus S$ where 
\begin{equation*}
    S = \{ [x_1^T \ x_2^T \ \dots \ x_n^T]^T \in Q^n : \exists i \neq j \text{ s.t. } x_i = x_j\}.
\end{equation*}
In practice, it seems possible to neglect this subtlety and still achieve convergence, but we will nonetheless treat this rigorously in the upcoming section for theoretical completeness. Lastly, if one were to remove the minimum-distance constraints and choose $\mathcal{D} = Q^n \setminus S$,  Assumption \ref{assumption:compact} would be violated since $S$ is closed, and thus ensuring that all three assumptions are satisfied is non-trivial. 

In order to justify Assumption \ref{assumption:KKT}, we need to show that our problem setting satisfies certain constraint qualifications, which will be the focus of the next section.

\section{CONVERGENCE ANALYSIS}

In this section, we will show that Assumptions \ref{assumption:KKT} - \ref{assumption:Lipschitz} are satisfied for problem \eqref{problem:constrained} under the mild assumption that the minimum distance between sensors can be chosen sufficiently small. This will then guarantee that the PPALA guarantees the convergence to a KKT-point by \cite{ppala}. 

\subsection{Existence of a KKT-point}

The KKT-conditions are a set of first-order optimality conditions commonly used to locally solve non-convex problems \cite{Bertsekas}. In order to guarantee that the KKT-conditions are necessary at a minimizer, one usually has to show a constraint qualification, i.e. that the constraints are well-behaved around the minimizer. Therefore, in order to show the existence of a KKT-point for problem \eqref{problem:constrained}, i.e. Assumption \ref{assumption:KKT}, it suffices to establish a suitable constraint qualification at some minimizer, since the KKT conditions are then necessary at that point \cite{MFCQ}. In our case, we will show the Mangasarian-Fromowitz Constraint Qualification (described below). When analyzing constraint qualifications, an important concept is that of \textit{active} inequality constraints. An inequality constraint is called active if it is satisfied with equality.  

\begin{definition}[\cite{MFCQ}]\label{def:mfcq}
    A point $\bm{x} \in \mathcal{D}$ is said to satisfy the Mangasarian-Fromowitz Constraint Qualification (MFCQ) if there exists a vector $\bm{d} \in \mathbb{R}^{nd}$ such that
    \begin{align}
        g(\bm{x}) = 0 & \Rightarrow \nabla g(\bm{x})^T\bm{d} < 0, \\
        g_{ij}(\bm{x}) = 0 & \Rightarrow \nabla g_{ij}(\bm{x})^T\bm{d} < 0, \ \forall 1\leq i< j \leq n.
    \end{align}
\end{definition}
\vspace{0.1in}

Definition \ref{def:mfcq} essentially states that at point $\bm{x}$ there exists a direction $\bm{d}$ in which it is possible to move and make active constraints inactive. In order to show the MFCQ, we will use the following lemma.

\begin{lemma}[\cite{horn2012matrix}]\label{lemma:gershgorin} 
    (Gershgorin's Circle Theorem) Let $\Gamma = (\gamma_{ij}) \in \mathbb{C}^{n\times n}$ be a complex valued matrix. Let $R_i = \sum_{j \neq i} |\gamma_{ij}|$. Then every eigenvalue of $\Gamma$ is contained in at least one of the Gershgorin discs
    \begin{equation*}
        \bar{D}(\gamma_{ii}, R_i) = \left\{ z \in \mathbb{Z}: |z - \gamma_{ii}| \leq R_i \right\}, \ i = 1, \dots, n.
    \end{equation*}
\end{lemma}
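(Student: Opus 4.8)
The plan is to give the standard eigenvector-component argument. Fix an arbitrary eigenvalue $\lambda \in \mathbb{C}$ of $\Gamma$ and let $v = (v_1, \dots, v_n)^T \neq 0$ be a corresponding eigenvector, so that $\Gamma v = \lambda v$. The key idea is to examine this identity along the coordinate where $v$ is largest in modulus. First I would choose an index $i$ with $|v_i| = \max_{1 \leq k \leq n} |v_k|$; since $v \neq 0$ we have $|v_i| > 0$, which is precisely what makes the final division legitimate.

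Next I would write out the $i$-th row of the eigenvalue equation, namely $\sum_{j=1}^n \gamma_{ij} v_j = \lambda v_i$, and isolate the diagonal term to obtain
\[
(\lambda - \gamma_{ii}) v_i = \sum_{j \neq i} \gamma_{ij} v_j .
\]
Taking moduli, applying the triangle inequality, and then using $|v_j| \leq |v_i|$ for every $j$ by the choice of $i$, gives
\[
|\lambda - \gamma_{ii}|\,|v_i| \leq \sum_{j \neq i} |\gamma_{ij}|\,|v_j| \leq \left( \sum_{j \neq i} |\gamma_{ij}| \right)|v_i| = R_i\,|v_i| .
\]
Dividing both sides by $|v_i| > 0$ yields $|\lambda - \gamma_{ii}| \leq R_i$, that is, $\lambda \in \bar{D}(\gamma_{ii}, R_i)$. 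Since $\lambda$ was an arbitrary eigenvalue, every eigenvalue lies in at least one Gershgorin disc, which is the claim.

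There is no substantial obstacle here; the single step that requires care is the selection of the maximal-modulus index, which is exactly what guarantees the bounds $|v_j| \leq |v_i|$ that let the off-diagonal sum be absorbed into $R_i$. The remaining steps are a direct application of the triangle inequality and a division justified by $|v_i| \neq 0$. No appeal to the earlier results of the paper is needed, as this lemma is a self-contained linear-algebra fact invoked only as a tool in the subsequent analysis.
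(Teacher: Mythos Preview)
Your argument is correct and is precisely the classical proof of Gershgorin's Circle Theorem. The paper does not supply its own proof of this lemma; it merely states the result with a citation to \cite{horn2012matrix}, so there is nothing further to compare.
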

\vspace{0.1in}

We are now ready to state one of the main theorems of this paper.

\begin{theorem}\label{thm:1}
    If $g(\bm{x}) = \tau - \det P^T\mathcal{L}(\bm{x})P = 0$, then $\nabla g(\bm{x}) \neq \bm{0}$. 
\end{theorem}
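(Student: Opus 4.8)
The plan is to prove the stronger statement that the directional derivative of $g$ along the \emph{dilation} direction $\bm{d}=\bm{x}$ is strictly positive, i.e.\ $\nabla g(\bm{x})^T\bm{x}>0$, which of course forces $\nabla g(\bm{x})\neq\bm{0}$. The underlying intuition is that uniformly scaling all sensor positions toward the origin can only decrease the pairwise distances $\|x_i-x_j\|$, hence (since $\sigma_w$ is strictly increasing) can only increase every edge weight $a_{ij}$, hence can only increase $\det P^T\mathcal{L}(\bm{x})P$; moving in the opposite direction $+\bm{x}$ therefore strictly decreases this determinant, so $g=\tau-\det P^T\mathcal{L}(\bm{x})P$ strictly increases.

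To make this rigorous I would first note that $\nabla g(\bm{x})$ is only defined when the positions are pairwise distinct (the Euclidean norm is not differentiable at $0$), so I may assume $x_i\neq x_j$ for all $i\neq j$; moreover $g(\bm{x})=0$ gives $\det M(\bm{x})=\tau>0$ with $M(\bm{x}):=P^T\mathcal{L}(\bm{x})P$, so $M(\bm{x})$ is symmetric positive definite. Differentiating $s\mapsto\det M(s\bm{x})$ at $s=1$ via the chain rule and applying the trace formula of Lemma \ref{lemma:grad_g} termwise,
\begin{align*}
\nabla g(\bm{x})^T\bm{x} &= -\tfrac{d}{ds}\big|_{s=1}\det M(s\bm{x})\\
&= -\det M(\bm{x})\,\tr\big[M^{-1}(\bm{x})\,P^T\mathcal{L}'\,P\big],
\end{align*}
where $\mathcal{L}':=\sum_{i,k}x_i^{(k)}\,\partial_{x_i^{(k)}}\mathcal{L}(\bm{x})=\tfrac{d}{ds}\big|_{s=1}\mathcal{L}(s\bm{x})$. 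It therefore suffices to show that $\tr[M^{-1}(\bm{x})P^T\mathcal{L}'P]<0$.

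Next I would compute $\mathcal{L}'$ entrywise. Since $a_{ij}(s\bm{x})=\sigma_w(\varepsilon-s\|x_i-x_j\|)$ and $\sigma_w'>0$, the off-diagonal entries are $(\mathcal{L}')_{ij}=\|x_i-x_j\|\,\sigma_w'(\varepsilon-\|x_i-x_j\|)>0$ for $i\neq j$, while each row of $\mathcal{L}'$ sums to $0$; hence $-\mathcal{L}'$ is itself a weighted Laplacian with strictly positive edge weights. By Gershgorin's theorem (Lemma \ref{lemma:gershgorin}) every eigenvalue of $-\mathcal{L}'$ lies in $[0,\infty)$, and since its underlying graph is complete, hence connected, Lemma \ref{lemma:laplacian} shows $-\mathcal{L}'$ is positive semidefinite with kernel exactly $\mathrm{span}(\bm{1}_n)$. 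Because the columns of $P$ form an orthonormal basis of $\bm{1}_n^\perp$, it follows that $P^T(-\mathcal{L}')P$ is positive \emph{definite}. With $A:=M^{-1}(\bm{x})\succ0$ and $B:=P^T(-\mathcal{L}')P\succ0$ we have $\tr[M^{-1}(\bm{x})P^T\mathcal{L}'P]=-\tr(AB)=-\tr(A^{1/2}BA^{1/2})<0$, which plugged into the identity above yields $\nabla g(\bm{x})^T\bm{x}=-\tau\,\tr[M^{-1}(\bm{x})P^T\mathcal{L}'P]>0$.

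I expect the main obstacle to be the linear-algebra step around $P$: namely, upgrading positive semidefiniteness to positive definiteness after the congruence $P^T(\cdot)P$ for both $-\mathcal{L}'$ and $\mathcal{L}(\bm{x})$ --- i.e.\ verifying that their only null direction is $\bm{1}_n$, precisely the direction that $P$ annihilates --- and checking that the inverse form of Lemma \ref{lemma:grad_g} is legitimately applicable (which it is, since $M(\bm{x})$ is invertible wherever $g(\bm{x})=0$). A more combinatorial alternative that avoids the trace computation altogether is to invoke the weighted matrix--tree theorem, $\det M(\bm{x})=n\sum_{T}\prod_{(i,j)\in T}a_{ij}(\bm{x})$ over spanning trees $T$, which exhibits $s\mapsto\det M(s\bm{x})$ as a positive combination of strictly decreasing functions and hence strictly decreasing; this, however, relies on a result not developed in the paper.
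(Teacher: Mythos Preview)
Your proof is correct and genuinely different from the paper's. Both arguments reduce to showing that $\tr\big[M^{-1}(\bm{x})\,P^T N\,P\big]>0$ for a suitable symmetric matrix $N$ with Laplacian-like structure, and both finish with essentially the same positive-definiteness trace trick. The difference lies in the choice of direction. The paper picks a \emph{single coordinate} $x_l^{(k)}$ of an extremal node (one whose $k$-th coordinate is minimal among all nodes), so that every $\partial_{x_l^{(k)}}a_{ij}\ge0$; this makes $N=\partial_{x_l^{(k)}}\mathcal{L}$ positive semidefinite via Gershgorin, but only guarantees that $P^TNP$ is PSD and \emph{nonzero}, which then requires a diagonalisation step to conclude. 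You instead take the \emph{dilation} direction $\bm{d}=\bm{x}$, which makes $-\mathcal{L}'$ the Laplacian of a \emph{complete} weighted graph (all edge weights $\|x_i-x_j\|\,\sigma_w'(\cdot)>0$ since the points are distinct), so $P^T(-\mathcal{L}')P$ is outright positive \emph{definite} and the trace argument is a one-liner.

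Your route is slicker for Theorem~\ref{thm:1} in isolation. The paper's route, however, is deliberately engineered for what comes next: because its descent direction moves \emph{only one node}, it immediately yields Corollary~5.1, which is the key ingredient in the MFCQ proof of Theorem~5.2 (one needs a node that can be perturbed without interfering with the active minimum-distance constraints on the remaining nodes). Your dilation direction moves every node simultaneously and so does not directly supply this corollary; recovering the single-node statement would require an additional argument. The matrix--tree alternative you sketch at the end is also valid and arguably the most conceptual, but, as you note, it imports machinery not developed in the paper.
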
 

\begin{proof}
    By Lemma \ref{lemma:grad_g} we have
    \begin{equation*}
        \frac{\partial}{\partial x_{l}^{(k)}} g(\bm{x}) = - \det(M(\bm{x}))\tr\left[ M^{-1}(\bm{x})P^T \frac{\partial}{\partial x_{l}^{(k)}}\mathcal{L}(\bm{x})P \right].
    \end{equation*}
    Since $\det(M(\bm{x}))= \det P^T\mathcal{L}(\bm{x})P = \tau > 0$ by assumption, it suffices to show that
    \begin{equation}\label{eq:nonzero_trace}
        \tr\left[ M^{-1}(\bm{x})P^T \frac{\partial}{\partial x_l^{(k)}}\mathcal{L}(\bm{x})P \right] \neq 0,
    \end{equation}
    for some indices $1 \leq l \leq n$, $1 \leq k \leq d$. Now, recalling that $\mathcal{L}(x) = \Delta(\bm{x}) - A(\bm{x})$, we obtain $\frac{\partial}{\partial x_l^{(k)}}\mathcal{L}(\bm{x})$ directly from the derivatives
    \begin{equation*}
        \frac{\partial}{\partial x_l^{(k)}}a_{ij}(\bm{x}) = \begin{cases}
            -\sigma_w'(\varepsilon - \|x_l - x_j\|)\frac{x_l^{(k)} - x_j^{(k)}}{\|x_l - x_j\|}, & \text{if } i = l,\\
            -\sigma_w'(\varepsilon - \|x_l - x_i\|)\frac{x_l^{(k)} - x_i^{(k)}}{\|x_l - x_i\|}, & \text{if } j = l,\\
            0, & \text{else.}
        \end{cases}
    \end{equation*}
    Because $x_1, \dots, x_n$ are distinct points there exist indices $1 \leq l \leq n$, $1 \leq k \leq d$ such that:
    \begin{align}
        x_{l}^{(k)} &\leq x_{j}^{(k)} \ \text{for all } j \neq l \label{eq:leq}, \\
        x_l^{(k)} &< x_j^{(k)} \  \text{for some } j \neq l \label{eq:lt}.
    \end{align}
    Since $\sigma_w' > 0$, it thus follows that $\frac{\partial}{\partial x_{l}^{(k)}}a_{ij}(\bm{x}) \geq 0$ for all $1 \leq i,j \leq n$ and is positive for at least some $i$ and $j$. Consequently, since $\frac{\partial}{\partial x_l^{(k)}}\mathcal{L}(\bm{x}) = \frac{\partial}{\partial x_l^{(k)}}\Delta(\bm{x}) - \frac{\partial}{\partial x_l^{(k)}}A(\bm{x})$, it follows that all of the Gershgorin discs are contained in the non-negative half-plane. Because $\frac{\partial}{\partial x_l^{(k)}}\mathcal{L}(\bm{x})$ is a real and symmetric matrix, it is positive semi-definite according to Lemma \ref{lemma:gershgorin}, and it is non-zero by \eqref{eq:lt}. Analogously to Lemma \ref{lemma:P}, $\frac{\partial}{\partial x_l^{(k)}}M(\bm{x}) = P^T\frac{\partial}{\partial x_l^{(k)}}\mathcal{L}(\bm{x})P$ is a positive semi-definite and non-zero matrix (i.e., it has at least one positive eigenvalue). Thus, there exists an orthogonal matrix $U$ such that $\frac{\partial}{\partial x_l^{(k)}}M(\bm{x}) = U^T\Lambda_{M'}U$, where $\Lambda_{M'} = \text{diag}(\nu_1, \nu_2, \dots, \nu_{n - 1})$ is the diagonal matrix with the eigenvalues of $\frac{\partial}{\partial x_l^{(k)}}M(\bm{x})$. By \eqref{eq:nonzero_trace} it suffices to show that $\tr\{M^{-1}(\bm{x})U^T\Lambda_{M'} U \} \neq 0$. We can obtain the following by the cyclic property of the trace
    \begin{equation*}
        \tr\{M^{-1}(\bm{x})U^T\Lambda_{M'} U \} = \tr\{UM^{-1}(\bm{x})U^T\Lambda_{M'}\}.
    \end{equation*}
    Finally, noting that $W = \left(w_{ij} \right) := UM^{-1}(\bm{x})U^T$ is positive definite since $M^{-1}$ is positive definite yields
    \begin{equation*}
        \tr\{ W\Lambda_{M'}\} = \sum_{i=1}^{n-1} \nu_i w_{ii} > 0,
    \end{equation*}
    since $w_{ii} > 0$ for all $1 \leq i \leq n-1 $ and $\nu_i > 0$ for at least some $i$.
\end{proof}

By the above proof, we have also proved the following corollary.

\begin{corollary}\label{corollary:hull_node}
    Let $x_i$ be some node satisfying \eqref{eq:leq} and \eqref{eq:lt} for some $1 \leq k \leq d$. Then there exists a $d_i \in \mathbb{R}^d$ such that $\nabla g(\bm{x})^T \bm{d} < 0$, where $\bm{d} = \begin{bmatrix}\bm{0}_1^T & \dots & \bm{0}_{i - 1}^T & d_i^T & \bm{0}_{i+1}^T & \dots & \bm{0}_n^T\end{bmatrix}^T$ and $\bm{0}_j$ is the zero vector in $\mathbb{R}^d$ for all $j \neq i$. 
\end{corollary}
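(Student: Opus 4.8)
The plan is to notice that the computation in the proof of Theorem~\ref{thm:1} establishes more than $\nabla g(\bm{x}) \neq \bm{0}$: for the \emph{specific} node index $i$ and coordinate $k$ appearing in the hypothesis, it actually pins down the sign of the corresponding partial derivative. Conditions \eqref{eq:leq} and \eqref{eq:lt} are precisely what is used there (and nothing else about the index is used) to conclude that $\frac{\partial}{\partial x_i^{(k)}}\mathcal{L}(\bm{x})$ is positive semidefinite and non-zero via Gershgorin's theorem, hence that $P^T\frac{\partial}{\partial x_i^{(k)}}\mathcal{L}(\bm{x})P$ has at least one positive eigenvalue, and hence, combined with the trace identity of Lemma~\ref{lemma:grad_g} and $\det M(\bm{x}) = \tau > 0$, that
\begin{equation*}
    \frac{\partial}{\partial x_i^{(k)}} g(\bm{x}) = -\det(M(\bm{x}))\,\tr\!\left[ M^{-1}(\bm{x})\,P^T \frac{\partial}{\partial x_i^{(k)}}\mathcal{L}(\bm{x})\,P \right] < 0 .
\end{equation*}

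So the first step is to re-run the relevant portion of the proof of Theorem~\ref{thm:1} with $l$ specialized to $i$, extracting this strict inequality. The second step is to simply set $d_i$ equal to the $k$-th coordinate vector $e_k \in \mathbb{R}^d$ and let $\bm{d}$ be the block vector that equals $d_i$ in its $i$-th block and $\bm{0}$ elsewhere, exactly as in the statement. Because the support of $\bm{d}$ is confined to block $i$, the inner product collapses to a single term,
\begin{equation*}
    \nabla g(\bm{x})^T \bm{d} = \nabla_{x_i} g(\bm{x})^T d_i = \frac{\partial}{\partial x_i^{(k)}} g(\bm{x}) < 0,
\end{equation*}
which is the claim. (One could instead take $d_i = -\nabla_{x_i} g(\bm{x})$ whenever that block is non-zero, but choosing $e_k$ avoids having to reason about the sign pattern of the other $d-1$ components.)

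I do not expect a substantive obstacle: the corollary is essentially a careful re-reading of the Theorem~\ref{thm:1} argument, and the only points needing attention are (i) verifying that the hypothesis ``$x_i$ satisfies \eqref{eq:leq} and \eqref{eq:lt} for some $k$'' is literally the condition under which the sign computation goes through, so that no node-existence argument has to be repeated inside the corollary (that having already been discharged in Theorem~\ref{thm:1} via the distinctness of the $x_i$'s and a lexicographic-minimum choice of index), and (ii) being explicit that restricting the support of $\bm{d}$ to the $i$-th block is exactly what reduces $\nabla g(\bm{x})^T\bm{d}$ to the single partial derivative $\partial g/\partial x_i^{(k)}$. This localized, single-node descent direction is the reusable building block that will later be combined across nodes to produce the global direction $\bm{d}$ required by the MFCQ in Definition~\ref{def:mfcq}.
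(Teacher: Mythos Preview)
Your proposal is correct and matches the paper's approach exactly: the paper gives no separate argument for the corollary, simply stating that ``by the above proof, we have also proved the following corollary,'' and your write-up is precisely the explicit unpacking of that remark---the proof of Theorem~\ref{thm:1} shows $\tr\{W\Lambda_{M'}\}>0$ and $\det M(\bm{x})>0$, hence $\partial g/\partial x_i^{(k)}<0$, and taking $d_i=e_k$ isolates that component.
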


Note that the existence of the node in Corollary \ref{corollary:hull_node} is always guaranteed whenever the nodes are distinct. To prove the MFCQ, we will further assume that we can choose the node such that it is not involved in any active minimum-distance constraints. While this assumption does not hold for all $\bm{x} \in \mathcal{D}$, it is a reasonable assumption for minimizers of \eqref{problem:constrained} since in maximum coverage problems we are not expecting sensors to neighbor each other by their physical radius in optimal configurations. In fact, keeping all other parameters fixed, the assumption is guaranteed to hold at a minimizer for some sufficiently small $\delta > 0$. Formally, if $G_\delta(\bm{x}) = (V, E_\delta(\bm{x}))$ is the graph with edges $(i, j) \in E_\delta$ if and only if $\|x_i - x_j\| = \delta$, we will assume that the node in Corollary \ref{corollary:hull_node} can be chosen to be isolated in $G_\delta(\bm{x})$. Graph $G_\delta(\bm{x})$ precisely captures the active minimum-distance constraints. 

\begin{theorem}\label{thm:2}
    Let $\bm{x} \in \mathcal{D}$ and let $x_i$ and $\bm{d}$ be those obtained from Corollary \ref{corollary:hull_node}. If $x_i$ is isolated in $G_\delta(\bm{x})$, then $\bm{x}$ satisfies the MFCQ. 
\end{theorem}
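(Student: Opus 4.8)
\emph{Proof proposal.} The plan is to take the direction $\bm{d}$ supplied by Corollary~\ref{corollary:hull_node} — which is supported on the single block $i$ and already gives $\nabla g(\bm{x})^T\bm{d}<0$ — and repair its behaviour on the active minimum-distance constraints by adding a small multiple of a global \emph{expansion} direction. The gap that Corollary~\ref{corollary:hull_node} leaves open is that $\bm{d}$ may have \emph{zero} inner product with $\nabla g_{jk}(\bm{x})$, whereas Definition~\ref{def:mfcq} requires a \emph{strict} decrease along every active constraint.

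First I would record the gradient of a distance constraint: for $j\neq k$ with $x_j\neq x_k$, the function $g_{jk}(\bm{x})=\delta-\|x_j-x_k\|$ has $j$-th block $-(x_j-x_k)/\|x_j-x_k\|$, $k$-th block $(x_j-x_k)/\|x_j-x_k\|$, and all other blocks zero. Next, I would define the expansion direction $\bm{e}=[(x_1-\bar x)^T\ \cdots\ (x_n-\bar x)^T]^T$ with $\bar x=\frac{1}{n}\sum_{k=1}^n x_k$, so that its $j$-th block minus its $k$-th block equals $x_j-x_k$. Substituting gives $\nabla g_{jk}(\bm{x})^T\bm{e}=-\|x_j-x_k\|$, which is strictly negative whenever $x_j\neq x_k$; in particular it equals $-\delta$ at every active constraint $g_{jk}(\bm{x})=0$, since then $\|x_j-x_k\|=\delta>0$.

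Then I would set $\bm{d}'=\bm{d}+c\,\bm{e}$ and choose the scalar $c>0$. Because $x_i$ is isolated in $G_\delta(\bm{x})$, the index $i$ is not an endpoint of any edge of $G_\delta(\bm{x})$, i.e.\ of any active minimum-distance constraint; hence for every active $g_{jk}$ the unique nonzero block of $\bm{d}$ (block $i$) does not overlap the nonzero blocks of $\nabla g_{jk}(\bm{x})$ (blocks $j$ and $k$), so $\nabla g_{jk}(\bm{x})^T\bm{d}=0$, and therefore $\nabla g_{jk}(\bm{x})^T\bm{d}'=-c\,\delta<0$ for every such constraint and every $c>0$. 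For the connectivity constraint, if $g(\bm{x})=0$ then $\nabla g(\bm{x})^T\bm{d}'=\nabla g(\bm{x})^T\bm{d}+c\,\nabla g(\bm{x})^T\bm{e}$, and since the first summand is a fixed strictly negative number, the whole expression stays strictly negative once $c$ is taken small enough. Fixing such a $c$, the vector $\bm{d}'$ satisfies both implications of Definition~\ref{def:mfcq}, so $\bm{x}$ satisfies the MFCQ.

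I expect the only delicate point to be the sign bookkeeping around $c$: the expansion direction will in general \emph{increase} $g$ (spreading the sensors apart weakens connectivity), so $\nabla g(\bm{x})^T\bm{e}$ may be positive, and one must lean on the strict negativity of $\nabla g(\bm{x})^T\bm{d}$ from Corollary~\ref{corollary:hull_node} to absorb it with a sufficiently small $c$. The isolation hypothesis on $x_i$ is precisely what decouples $\bm{d}$ from the $\delta$-tight pairs; without it, moving node $i$ could tighten an already-active distance constraint and the clean cancellation $\nabla g_{jk}(\bm{x})^T\bm{d}=0$ would be lost, forcing a more careful accounting of which constraints $\bm{d}$ perturbs.
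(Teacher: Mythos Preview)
Your argument is correct and mirrors the paper's proof: both perturb the single-block direction $\bm{d}$ from Corollary~\ref{corollary:hull_node} by a small multiple of a ``spreading'' vector whose $j$-th and $k$-th blocks differ by $x_j-x_k$, so that every active $g_{jk}$ strictly decreases while the strict negativity on $g$ is preserved for a sufficiently small coefficient. The only cosmetic difference is your choice of $\bm{e}$ (expansion about the centroid) versus the paper's $\Delta\bm{d}=[x_1^T,\dots,x_{i-1}^T,\bm{0}^T,x_{i+1}^T,\dots,x_n^T]^T$; since both agree on the relevant block differences and node $i$ is not involved in any active distance constraint, the computations are identical.
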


\begin{proof}
    By Corollary \ref{corollary:hull_node} $\nabla g(\bm{x})^T\bm{d} < 0$. Now, let $\Delta \bm{ d} = \begin{bmatrix}
        x_1^T & \dots & x_{i - 1}^T & \bm{0}_i^T & x_{i + 1}^T & \dots & x_n^T
    \end{bmatrix}^T$ and define $\bar{\bm{d}} = \bm{d} + t\Delta\bm{d}$, for some real number $t > 0$. Then, for any edge $(l, k) \in E_\delta(\bm{x})$ it can be shown that
    \begin{equation*}
    \begin{aligned}
        \nabla g_{lk}(\bm{x})^T\bar{\bm{d}} 
        &= \nabla g_{lk}(\bm{x})^T t\Delta\bm{d} \\
        &= \frac{(x_l - x_k)^T(tx_k - tx_l)}{\|x_l - x_k\|} \\ 
        &=-t\frac{\|x_l - x_k\|^2}{\|x_l-x_k\|} = -t\delta < 0.
    \end{aligned}
    \end{equation*}
    And lastly, 
    \begin{equation*}
        \nabla g(\bm{x})^T \bar{\bm{d}} = \underbrace{\nabla g(\bm{x})^T \bm{d}}_{<0} + t\left(\nabla g(\bm{x})^T \Delta\bm{d}\right) < 0
    \end{equation*}
    if $t$ is chosen sufficiently small. 
\end{proof}

\subsection{Compactness of the domain}

Showing the compactness of the feasible region is straightforward as it is the intersection of closed sets and $Q$ is assumed to be compact. Therefore, a formal proof is omitted. However, it is important to note that removing the minimum-distance constraints would contradict Assumption \ref{assumption:compact} since $S$ is closed and its complement $Q^n \setminus S$ is open. That being said, it appears possible in practice to disregard the minimum-distance constraints entirely and still achieve convergence.  

\subsection{Lipschitz continuity}

In this subsection, we show that $\nabla g$ and $\nabla\mathcal{H}$ are continuously differentiable on $Q^n \setminus S$. The latter will only be shown under some additional assumptions on $f$, which are satisfied at least by one of the most common choices in the literature, $f(x) = x^2/2$. From this, it follows from standard results that $\nabla g$ and $\nabla\mathcal{H}$ are Lipschitz-continuous on any compact subset $D \subset Q^n \setminus S$.  
\vspace{0.1in}

\begin{proposition}
    If $f$ is three times continuously differentiable and $f(0) = 0$, then $\nabla \mathcal{H}$ is continuously differentiable on $Q^n \setminus S$.
\end{proposition}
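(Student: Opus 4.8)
The plan is to show that every second-order partial derivative of $\mathcal{H}$ exists and is continuous on the open set $Q^n\setminus S$; this is exactly the assertion that $\nabla\mathcal{H}$ is $C^1$ there, and the Lipschitz bounds on any compact $D\subset Q^n\setminus S$ then follow as noted. I would work from the Voronoi form $\mathcal{H}(\bm{x})=\sum_{i}\int_{V_i(\bm{x})}f(\|q-x_i\|)\phi(q)\,dq$ and the first-order identity \eqref{eq:grad_H}, which give $\nabla_{x_i}\mathcal{H}(\bm{x})=\int_{V_i(\bm{x})}\psi_i(q,\bm{x})\,dq$ with $\psi_i(q,\bm{x}):=f'(\|q-x_i\|)\,\tfrac{x_i-q}{\|q-x_i\|}\,\phi(q)$; the hypotheses on $f$ (strict monotonicity, $f(0)=0$, smoothness) together with continuity of $\phi$ on the compact set $Q$ are what validate these representations, and $\psi_i$ is bounded. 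Fixing $\bm{x}^0\in Q^n\setminus S$, I would set $\delta_0:=\min_{i\neq j}\|x_i^0-x_j^0\|>0$ and restrict to a neighborhood $N$ of $\bm{x}^0$ on which all pairwise distances exceed $\delta_0/2$; then $\|q-x_i\|\le\operatorname{diam}(Q)$ for $q\in Q$, $\|q-x_i\|=\|q-x_j\|\ge\delta_0/4$ for $q$ on the shared Voronoi face $\Gamma_{ij}(\bm{x}):=\partial V_i\cap\partial V_j$, and $\overline{B}(x_i,\delta_0/8)\cap Q\subset V_i(\bm{x})$.

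The second step is to differentiate $\bm{x}\mapsto\int_{V_i(\bm{x})}\psi_i(q,\bm{x})\,dq$ coordinatewise via the transport (generalized Leibniz) theorem for the moving domain $V_i(\bm{x})$, whose boundary consists of the bisector faces $\Gamma_{im}$ ($m\neq i$), with supporting hyperplanes, outward normals $n_{im}=(x_m-x_i)/\|x_m-x_i\|$, and normal velocities that are smooth, bounded rational functions of $(x_i,x_m)$ on $N$, together with a portion of $\partial Q$; since $\partial Q$ is fixed, the normal velocity of $\partial V_i$ vanishes along that portion and it yields no flux term. As $\psi_i$ does not depend on $x_j$ for $j\neq i$, the $j\neq i$ derivative is a single surface integral over $\Gamma_{ij}$, while the $j=i$ derivative is a volume integral $\int_{V_i}\partial_{x_i^{(l)}}\psi_i\,dq$ plus surface integrals over all the $\Gamma_{im}$. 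On each face $\|q-x_i\|\ge\delta_0/4$, so $\psi_i$ is $C^1$ in $\bm{x}$ and uniformly bounded there; parametrizing $\Gamma_{im}(\bm{x})$ over a fixed reference hyperplane, so that $\bm{x}$ enters only through a continuously varying polytopal ``window'' indicator, a smooth Jacobian, and the bounded integrand, dominated convergence gives continuity of each surface term in $\bm{x}$ --- including at configurations where a face $\Gamma_{im}$ appears or disappears, since it then shrinks to a set of zero $(d-1)$-measure and the term vanishes continuously. A routine excision argument (remove $\overline{B}(x_i,\eta)$, apply the smooth transport theorem on the remainder, let $\eta\to0$, using that $\psi_i$ is bounded and $\partial_{x_i^{(l)}}\psi_i$ is locally integrable) justifies the transport theorem near the single point $q=x_i\in V_i$ at which $\partial_{x_i^{(l)}}\psi_i$ is singular.

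The remaining point --- the one I expect to be the crux --- is continuity in $\bm{x}$ of the diagonal volume term $\int_{V_i(\bm{x})}\partial_{x_i^{(l)}}\psi_i(q,\bm{x})\,dq$. With $\rho=\|q-x_i\|$ and $e=(x_i-q)/\rho$, $\partial_{x_i^{(l)}}\psi_i$ equals $\phi(q)$ times the $l$-th column of $f''(\rho)\,e\,e^{T}+f'(\rho)\,\tfrac{I-e\,e^{T}}{\rho}$, which has an integrable but genuine singularity at $q=x_i$ whose location itself moves with $\bm{x}$ --- exactly what rules out a naive uniform domination. I would split $V_i=\big(V_i\cap\overline{B}(x_i,\delta_0/8)\big)\cup\big(V_i\setminus\overline{B}(x_i,\delta_0/8)\big)$. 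On the complement $\rho\ge\delta_0/16$, so the integrand is bounded, continuous in $\bm{x}$, and dominated by $c\,\phi\in L^{1}(Q)$; writing the integral as $\int_{Q}\mathbf{1}_{V_i(\bm{x})\setminus\overline{B}(x_i,\delta_0/8)}(q)\,\partial_{x_i^{(l)}}\psi_i(q,\bm{x})\,dq$ with the indicator converging a.e., dominated convergence gives continuity. On the ball, substituting $q=x_i+y$ makes the domain essentially fixed --- all of $\overline{B}(0,\delta_0/8)$, except for a vanishing boundary layer near $y=0$ when $x_i$ is within $\delta_0/8$ of $\partial Q$ --- and the integrand becomes $\phi(x_i+y)$ times the $l$-th column of $f''(\|y\|)\tfrac{y\,y^{T}}{\|y\|^{2}}+f'(\|y\|)\tfrac{I-y\,y^{T}/\|y\|^{2}}{\|y\|}$; since $f\in C^{3}$ makes $f',f''$ bounded on $[0,\operatorname{diam}(Q)]$ and $\phi$ is bounded, this is dominated by $c\big(|f''(\|y\|)|+|f'(\|y\|)|/\|y\|\big)\in L^{1}(\overline{B}(0,\delta_0/8))$ --- for $d\ge2$ by the Jacobian factor $\|y\|^{d-1}$, and for $d=1$ because $I-y\,y^{T}/\|y\|^{2}$ vanishes --- so dominated convergence, using continuity of $\phi$, again yields continuity. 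Assembling the three families of terms shows that all second-order partials of $\mathcal{H}$ exist and are continuous on $Q^n\setminus S$; a final consistency check is that the $(i,j)$ and $(j,i)$ cross-block surface terms agree, reflecting symmetry of the Hessian.
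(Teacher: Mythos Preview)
Your argument is correct, but the paper takes a much shorter route. Its key observation is that the integrand in \eqref{eq:grad_H} factors as $h(\|q-x_i\|)(x_i-q)\phi(q)$ with $h(r):=f'(r)/r$, and that under the stated smoothness hypotheses $h$ extends to a $C^1$ function at $r=0$; the paper spends essentially the whole proof verifying this one-variable fact via Taylor expansion. Once the integrand is genuinely $C^1$ in $(q,\bm{x})$ there is no singularity at $q=x_i$ at all, so the generalized Leibniz rule for Voronoi-type domains (\textup{\cite[Theorem~2.16]{bullo_book}}) applies directly to each component of $\nabla_{x_i}\mathcal{H}$, and the proof ends. You instead treat $\partial_{x_i^{(l)}}\psi_i$ as carrying a moving $1/\rho$-type singularity and handle it by an excision-plus-translation argument with dominated convergence. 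What your approach buys is robustness and self-containment: it does not rely on the $C^1$ extension of $h$ at the origin (so it would go through even when $f'(0)\neq 0$, using only integrability of $\|y\|^{-1}$ in $d\ge 2$) and it does not outsource the Leibniz machinery to a citation. What the paper's approach buys is brevity: by noticing that the apparent singularity is removable, all of your ball/complement splitting, change of variables, and excision become unnecessary.
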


\begin{proof}
    It suffices to show that the partial derivatives of $\nabla \mathcal{H}$ exist and are continuous. Recalling \eqref{eq:grad_H} and using the chain rule, we can obtain
    \begin{equation}\label{eq:partial_k}
    \begin{aligned}
        \nabla_{x_i}\mathcal{H} &= \int_{V_i} \nabla_{x_i} {f(||q-x_i||)} \phi(q) \ dq \\
        &= \int_{V_i} \underbrace{\frac{f'(||q-x_i||)}{{||q-x_i||}}}_{=: h(||q-x_i||)} (x_i - q) \phi(q) \ dq .
    \end{aligned}
    \end{equation}
    Now, if $h(x) := f'(x)/x$ is continuously differentiable, the claim follows analogously to \textup{\cite[Theorem 2.16]{bullo_book}}, by applying the generalized Leibniz rule to every component of $\nabla \mathcal{H}$. Function $h$ is trivially continuously differentiable for nonzero $x$, but for $x = 0$ closer attention is required. By assumption and Taylor's formula $h(x) = f'(0)x + \frac{f''(0)}{2}x^2 + \mathcal{O}(x^3)$ around $x = 0$, we can have the following limit
    \begin{equation*}
        \lim_{\Delta x \to 0} \frac{h(\Delta x) - h(0)}{\Delta x} =  \lim_{\Delta x \to 0}\frac{f''(0)}{2} + \mathcal{O}\left((\Delta x)^2\right) = \frac{f''(0)}{2}, 
    \end{equation*}
    indicating that $h'(0)$ exists. For nonzero $x$ 
    \begin{equation*}
    \begin{aligned}
        h'(x) &= \frac{xf'(x)-f(x)}{x^2} \\
        &= \frac{f'(x) - f'(0)}{x} - \frac{f''(0)}{2} + \mathcal{O}(x).
    \end{aligned}
    \end{equation*}
    Thus, $\lim_{x\to 0} h'(x) = f''(0)/2$, i.e. $h'$ is continuous at $x = 0$, which means that $h$ is continuously differentiable. 
    Therefore, we conclude that $\nabla\mathcal{H}$ is continuously differentiable on $Q^n\setminus S$. 
\end{proof}
\vspace{0.1in}

\begin{proposition}
    $\nabla g$ is continuously differentiable on $Q^n \setminus S$.  
\end{proposition}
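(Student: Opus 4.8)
The plan is to prove something slightly stronger than asked, namely that $g$ is in fact $C^\infty$ on $Q^n \setminus S$, from which continuous differentiability of $\nabla g$ is immediate. The argument is just a chain of elementary smoothness facts. First I would note that the Euclidean norm $\|\cdot\| : \mathbb{R}^d \to \mathbb{R}_{\geq 0}$ is $C^\infty$ on $\mathbb{R}^d \setminus \{\bm 0\}$, and that on $Q^n \setminus S$ one has $x_i \neq x_j$ for all $i \neq j$, so the map $\bm{x} \mapsto x_i - x_j$ never reaches the origin; hence $\bm{x} \mapsto \|x_i - x_j\|$ is $C^\infty$ on $Q^n \setminus S$ for every pair $i \neq j$. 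Since the sigmoid $\sigma_w(y) = 1/(1+e^{-wy})$ is $C^\infty$ on all of $\mathbb{R}$, each entry $a_{ij}(\bm{x}) = \sigma_w(\varepsilon - \|x_i - x_j\|)$ is a composition of $C^\infty$ maps (and $a_{ii}\equiv 0$), hence $C^\infty$ on $Q^n\setminus S$; the diagonal entries of $\Delta(\bm{x})$ are finite sums of the $a_{ij}$, so $\mathcal{L}(\bm{x}) = \Delta(\bm{x}) - A(\bm{x})$, and therefore $M(\bm{x}) = P^T\mathcal{L}(\bm{x})P$ whose entries are fixed linear combinations of those of $\mathcal{L}(\bm{x})$, have all entries $C^\infty$ on $Q^n\setminus S$.

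To finish, I would use that $\det : \mathbb{R}^{(n-1)\times(n-1)} \to \mathbb{R}$ is a polynomial in the matrix entries, hence $C^\infty$; composing with $\bm{x}\mapsto M(\bm{x})$ shows $\det M(\bm{x})$, and thus $g(\bm{x}) = \tau - \det M(\bm{x})$, is $C^\infty$ on $Q^n\setminus S$. In particular $\nabla g$ is continuously differentiable there, which is the claim. An alternative route, parallel to the proof of the previous proposition, is to differentiate the cofactor form in Lemma \ref{lemma:grad_g} directly: the cofactor matrix $C(\bm{x})$ has entries that are polynomials in the entries of $M(\bm{x})$ and hence $C^\infty$, while $\frac{\partial}{\partial x_i^{(k)}}\mathcal{L}(\bm{x})$ is built out of $\sigma_w'$ and derivatives of $\|x_i-x_j\|$, again $C^\infty$ away from $S$, so $\frac{\partial}{\partial x_i^{(k)}}g(\bm{x}) = -\tr[C^T(\bm{x})P^T\frac{\partial}{\partial x_i^{(k)}}\mathcal{L}(\bm{x})P]$ is a sum of products of $C^\infty$ functions; this variant has the minor advantage of never invoking invertibility of $M(\bm{x})$.

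There is no genuine obstacle here; the only point requiring care — and the reason the statement is not utterly trivial — is precisely the failure of the Euclidean norm to be differentiable at the origin, which is why the diagonal set $S$ must be removed. Once this $C^1$ (indeed $C^\infty$) property of $\nabla g$ is in hand, together with the analogous property of $\nabla\mathcal{H}$ from the previous proposition, Assumption \ref{assumption:Lipschitz} follows by the standard fact that a $C^1$ function on an open set is locally Lipschitz and a locally Lipschitz function on a compact set is Lipschitz, applied to any compact $D \subset Q^n\setminus S$.
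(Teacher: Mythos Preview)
Your proposal is correct, and the alternative route you sketch in your second paragraph is precisely the paper's proof: it writes $\frac{\partial}{\partial x_i^{(k)}} g(\bm{x}) = -\tr\bigl[C^T(\bm{x})\frac{\partial}{\partial x_i^{(k)}}M(\bm{x})\bigr]$ via Lemma~\ref{lemma:grad_g} and observes that the entries of the bracketed matrix are sums and products of smooth functions on $Q^n\setminus S$. Your primary route---arguing directly that $g$ is $C^\infty$ because $\det$ is a polynomial in the entries of $M(\bm{x})$---is a marginally cleaner variant of the same idea, avoiding even the cofactor formula, but the underlying content is identical.
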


\begin{proof}
    Again, it suffices to show that the partial derivatives of $\nabla g$ exist and are continuous on $Q^n \setminus S$. Denote by $x_i^{(k)}$ an arbitrary component of $\bm{x}$ and consider $\frac{\partial}{\partial x_{i}^{(k)}} g$. By definition of $g$ and Lemma \ref{lemma:grad_g}, we have
    \begin{equation*}
        \frac{\partial}{\partial x_{i}^{(k)}} g = -\tr \underbrace{\left[C^T(\bm{x}) \frac{\partial}{\partial x_i^{(k)}} M(\bm{x})\right]}_{:= B(\bm{x})}
    \end{equation*}
    Since, by Definition \ref{def:adjacency}, the elements of $B(\bm{x})$ are just products and sums of smooth functions of $\bm{x}$ whenever $\bm{x} \in Q^n \setminus S$, the claim follows by the linearity of the trace. 
\end{proof}

In this section, we have thus shown that the PPALA converges to a point satisfying the KKT optimality conditions for problems \ref{problem:constrained} and \ref{problem:regularized}, ensuring they are locally optmal. 

\begin{figure}[t]
    \centering
    \includegraphics[width=\linewidth]{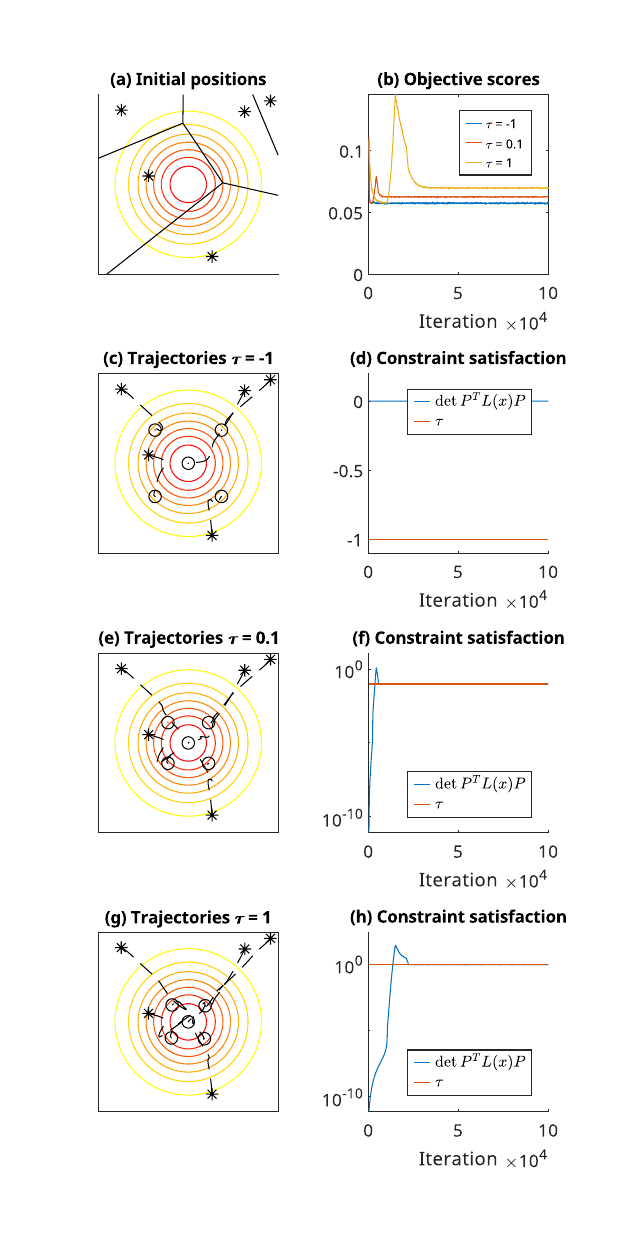}
    \vspace{-10mm}
    \caption{The deployment trajectories obtained when using the PPALA to solve problem \eqref{problem:constrained} with the density $\phi_1$ in \eqref{eq:unimodal} for $w = 20$ and $\varepsilon = 0.1$. Subfigures (c) - (d) capture the unconstrained problem with $\tau = -1$, while $\tau = 0.1$ and $\tau = 1$ in subfigures (e) - (f) and (g) - (h), respectively.}
    \label{fig:one_mode}
\end{figure}


\section{NUMERICAL RESULTS}

In this section, we present and discuss computational results for the PPALA algorithm applied to problem \eqref{problem:constrained}. 

\subsection{Simulation Setup}

The computational tests were implemented using \verb|MATLAB| on a laptop, and we used $\varepsilon = 0.1$, $w = 20$ and a few choices of the hyper-parameter $\tau>0$. In some of the simulations, we set $\tau = -1$ to represent the unconstrained problem; it is merely included to act as a reference of what a deployment without connectivity constraints would look like. Although the minimum-distance constraints were introduced to theoretically guarantee compactness of the feasible region and, in extension, convergence of the PPALA, they will be suppressed for simplicity in this section.  

We consider the case when $Q = [0, 1] \times [0, 1] \subset \mathbb{R}^2$, $f(x) = \frac{1}{2}x^2$ as in \cite{distributed, basis} and two choices of the density function $\phi$. First, we deploy $n = 5$ sensors over the density:
\begin{equation}\label{eq:unimodal}
    \phi_1(q) = \frac{\gamma_1}{\sigma\sqrt{2\pi}}e^{-\frac{||q-\mu_1||^2}{2\sigma^2}},
\end{equation}
where $\mu_1 = [0.5 \ 0.5]^T$, $\sigma = 1/5$ and $\gamma_1 > 0$ is a normalizing constant. The results of the deployment algorithm are shown in Fig. \ref{fig:one_mode}. 
Secondly, we deploy $n = 10$ sensors over the more complex density:
\begin{equation}\label{eq:bimodal}
    \phi_2(q) = \frac{\gamma_2}{\sigma\sqrt{2\pi}}\left( e^{-\frac{||q-\mu_2||^2}{2\sigma^2}} + e^{-\frac{||q-\mu_3||^2}{2\sigma^2}} \right),
\end{equation}
where $\mu_2 = [0.2 \ 0.2]^T$, $\mu_3 = [0.8 \ 0.8]^T$, and $\gamma_2 > 0$ is again a normalizing constant. The results of the deployment algorithm are shown in Fig. \ref{fig:bi_modal}. Lastly, the deployment algorithm was also tested in the setting of \eqref{problem:regularized} with the regularization function: 
\begin{equation}\label{eq:reg_instance}
    r(\bm{x}) = \frac{\alpha}{n}\sum_{i = 1}^n ||x_i - C(Q)||^2,
\end{equation}
where $\alpha > 0$ is a normalizing constant to balance the objectives and $C(Q)$ denotes the centroid of the domain $Q$. Similar regularization was used in \cite{similar_problem} to help drive agents out of poor local minima. The resulting deployments under regularization are shown in Fig. \ref{fig:regularized}, and they are also compared to an unregularized deployment. 


\begin{figure}[t]
    \centering
    \includegraphics[width=\linewidth]{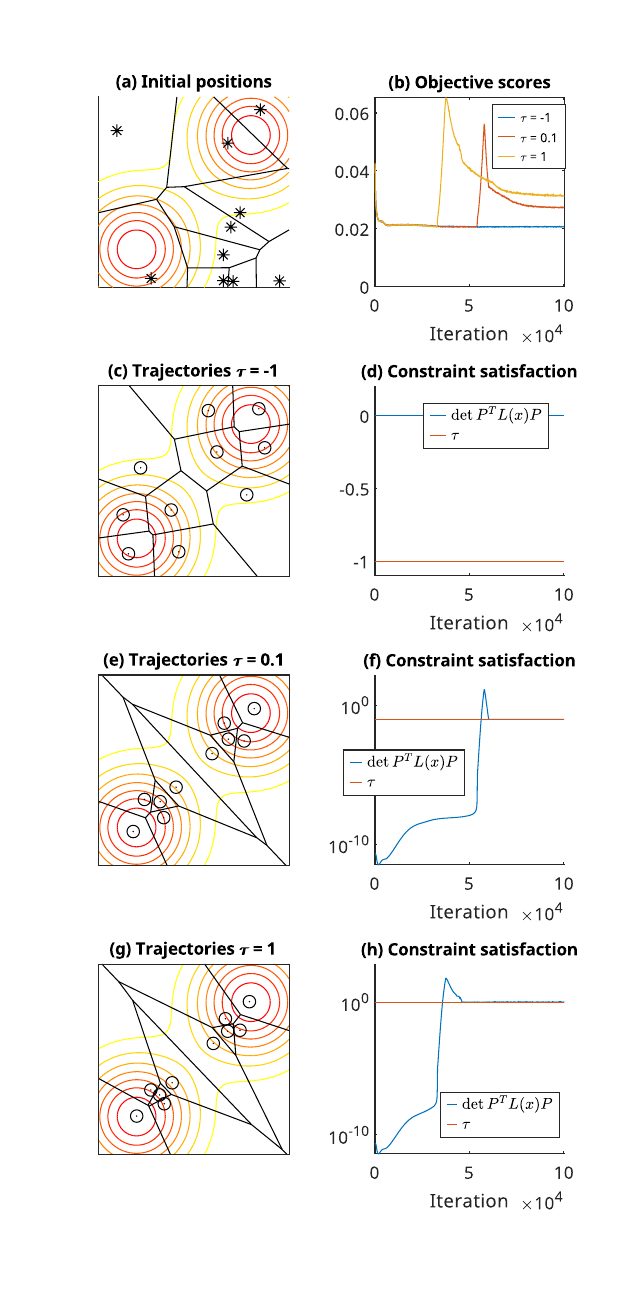}
    \vspace{-10.5mm}
    \caption{The deployments obtained when using the PPALA to solve problem \eqref{problem:constrained} with the bimodal density $\phi_2$ in \eqref{eq:bimodal} for $w = 20$ and $\varepsilon = 0.1$. Subfigures (c) - (d) capture the unconstrained problem with $\tau = -1$, while $\tau = 0.1$ and $\tau = 1$ in subfigures (e) - (f) and (g) - (h), respectively.}
    \label{fig:bi_modal}
\end{figure}

\subsection{Results and Discussion}

Fig. \ref{fig:one_mode} and Fig. \ref{fig:bi_modal} show that the PPALA can converge to local minima of the problem \eqref{problem:constrained} from a disconnected initial configuration. In Fig. \ref{fig:one_mode}, increasing the threshold value $\tau$, forces the nodes to be deployed with larger edge weights. Although this pattern can also be seen in Fig. \ref{fig:bi_modal}, we also notice that the distance between the two groups increases from Fig. \ref{fig:bi_modal}(e) to Fig.  \ref{fig:bi_modal}(g), despite an increase in $\tau$. This is to be expected since the algebraic connectivity can in fact still decrease as long as the remaining eigenvalues increase. However, as the decay of the edge weights $w$ increases and $\tau$ decreases, the resulting deployments will approximate the boolean disk communication model better and ensure the graph is connected (in the discrete regard). This does, naturally, pose greater numerical challenges, such as careful consideration of the step sizes used in the PPALA. Although this paper has established theoretical convergence guarantees even in these more challenging numerical settings, it is considered part of future work to explore these numerical challenges to reduce computational costs. 

Using Fig. \ref{fig:one_mode} one can also verify that the assumption made in Theorem \ref{thm:2} is not problematic in practice. Recall that, in order to establish the MFCQ, it was assumed that the node in Corollary \ref{corollary:hull_node} could be chosen to be isolated in $G_\delta(\bm{x})$. We also noted that for this assumption to hold at a minimizer, it suffices to choose the minimum distance between sensors, $\delta$, sufficiently small. To see this in practice, we consider the instance of the locally optimal deployment shown in Fig. \ref{fig:one_mode}(e). If $\delta$ were chosen to be smaller than the minimum distance between any pair of nodes in this deployment, the configuration would remain feasible and, as such, remain locally optimal even with the addition of the minimum-distance constraints. Moreover, due to how $\delta$ was chosen, the assumption of Theorem \ref{thm:2} would hold for the configuration. Thus, the configuration would be locally optimal and satisfy the MFCQ, ensuring that Assumption \ref{assumption:KKT} holds. 

Lastly, Fig. \ref{fig:regularized} provides an example of how regularization can complement the specified probability density. Increasing the regularization strength, $\alpha$, promotes deployments with denser coverage near the centroid of the domain. This can be seen as a more conservative approach, with regularization serving as a means to incorporate uncertainty into the prior belief about the underlying density function.

\begin{figure}[t]
    \centering
    \includegraphics[width=\linewidth]{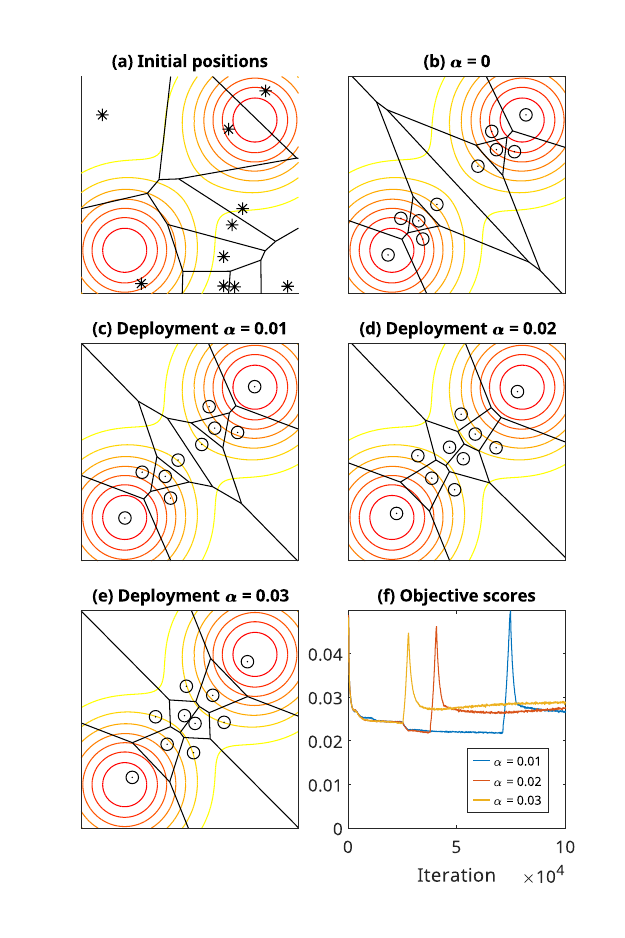}
    \vspace{-10mm}
    \caption{The deployments obtained from the PPALA when solving the regularized problem \eqref{problem:regularized} with the bimodal density $\phi_2$ in \eqref{eq:bimodal} for $w = 20$, $\varepsilon = 0.1$ and $\tau = 0.1$. Subfigure (b) captures the unregularized problem, while $\alpha = 0.01$, $\alpha = 0.02$ and $\alpha = 0.03$ in subfigures (c) - (e), respectively.}
    \label{fig:regularized}
\end{figure}
\vspace{0.1in}

\section{CONCLUSIONS}

In this work, we proposed a continuous optimization problem \eqref{problem:constrained} balancing the contradicting objectives of coverage and connectivity. The formulation lends itself to be solved by efficient non-convex optimization algorithms, and its flexibility is highlighted by the possibility of accounting for a wide range of regularization. The MFCQ was established, guaranteeing that the PPALA converges to a KKT-point. Moreover, numerical simulations showed that the PPALA can converge to feasible local minima from a disconnected initial configuration. Thus, the solution maintains good coverage while also achieving a higher degree of connectivity. The model deals with the trade-off between coverage and connectivity in a mathematically rigorous way. This does, however, require a smooth interpretation of the notion of connectivity. Therefore, a setting where a boolean transmission range is used (i.e., $w \to \infty$) will prove numerically challenging and, generally, careful consideration must be paid to the step size and other parameters of the PPALA to achieve convergence for large $w$. 

Recovering the discrete formulation by exploring these numerical challenges is part of future work. Still, one of the key strengths of the formulation is that it is well suited to gradient-based optimization approaches, and the fact that new efficient algorithms are emerging due to their application in machine learning promises good prospects for the proposed methodology.

\addtolength{\textheight}{0cm}   %

\bibliographystyle{IEEEtran}
\bibliography{refs}

\end{document}